\theoremstyle{plain}
\newtheorem{thm}{Theorem}[section]
\newtheorem{prop}[thm]{Proposition}
\newtheorem*{cor}{Corollary}
\theoremstyle{definition}
\newtheorem{defn}{Definition}[section]
\newtheorem{rem}{Remark}[section]
\begin{document}

\title{On abelian and additive complexity in infinite words}

\author{Hayri Ardal, Tom Brown, Veselin Jungi\'{c}, Julian Sahasrabudhe }
\date{July 2011}
\maketitle

\begin{abstract}  

The study of the structure of infinite words having \textit{bounded abelian complexity} was initiated by G. Richomme, K. Saari, and L. Q. Zamboni \cite{RSZ}.  In this note we define \textit{bounded additive complexity} for infinite words over a finite subset of $\mathbb{Z}^m.$   We provide an alternative proof of one of the results of \cite{RSZ}.

\end{abstract}


\section{Introduction}

Recently the study of infinite words with \textit{bounded abelian complexity} was initiated by G. Richomme, K. Saari, and L. Q. Zamboni \cite{RSZ}.  (See also \cite{CRSZ} and the references in \cite{CRSZ} and \cite{RSZ}.)  In particular, it is shown (in \cite{RSZ}) that if $\omega$ is an infinite word with bounded abelian complexity, then $\omega$ has \textit{abelian $k$-factors} for all $k \ge 1.$   (All these terms are defined below.)\

In this note we define \textit{bounded additive complexity}, and we show in particular that if $\omega$ is an infinite word (whose alphabet is a finite subset $S$ of $\mathbb{Z}^m$ for some $m \ge 1$) with bounded additive complexity, then $\omega$ has \textit{additive $k$-factors} for all $k \ge 1.$  As we shall see, this provides an alternative proof of the just-mentioned result concerning abelian $k$-factors.\

We are motivated by the following question.  In \cite{Freedman}, \cite{Jaroslaw}, \cite {HH}, and \cite{Pirillo}, it is asked whether or not there exists an infinite word on a finite subset of $\mathbb{Z}$ in which there do not exist two adjacent factors with equal lengths and equal sums.  (The \textit{sum} of the factor $x_1x_2\dots x_n$ is $x_1+x_2+\dots + x_n.$)  This question remains open, although some partial results can be found in \cite{Au}, \cite{CCSS}, \cite{Freedman}.


\section{Additive complexity}

\subsection{Infinite words on finite subsets of $\mathbb{Z}$}


\begin{defn}
  Let $\omega$ be an infinite word on a finite subset $S$ of $\mathbb{Z}$.  For a factor $B=x_1x_2\dots x_n$ of $\omega$, $\sum B$ denotes the sum $x_1+x_2+\cdots +x_n.$  Let $$\phi_\omega(n) = \{ \sum  B :  \mbox{ $B$ is a factor of $\omega$ with length $n$}\}.$$ The function $|\phi_{\omega}|$ (where $|\phi_{\omega}|(n) = |\phi_{\omega}(n)|, n \ge 1)$ is called the \textit{additive complexity} of the word $\omega.$\
  
  If $B_1B_2\cdots B_k$ is a factor of $\omega$ such that $|B_1| = |B_2|=\cdots = |B_k|$ and $\sum B_1=\sum B_2=\cdots=\sum B_k,$ we call $B_1B_2\cdots B_k$ an \textit{additive $k$-power}.

  We say that $\omega$ has \textit{bounded additive complexity} if any one (and hence all) of the three conditions in the following proposition (Proposition 2.1) hold.\end{defn}


\begin{prop}  

Let $\omega$ be an infinite word on the alphabet $S$, where $S$ is a finite subset of $\mathbb{Z}$.  Then the following three statements are equivalent.\

1.  There exists $M_1$ such that if $B_1B_2$ is a factor of $\omega$ with $|B_1| = |B_2|,$ then $|\sum B_1 - \sum B_2| \le M_1.$\

2. There exists $M_2$ such that if $B_1,B_2$ are factors of $\omega$ (not necessarily adjacent) with $|B_1| = |B_2|,$ then $|\sum B_1 - \sum B_2| \le M_2.$\

3.  There exists $M_3$ such that $|\phi_{\omega}(n)| \le M_3$ for all $n \ge 1$.

\end{prop}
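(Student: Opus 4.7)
The plan is to prove the cycle (2) $\Rightarrow$ (1), (2) $\Rightarrow$ (3), (3) $\Rightarrow$ (2), and (1) $\Rightarrow$ (2). The first two implications are immediate: (2) $\Rightarrow$ (1) with $M_1 = M_2$, and (2) $\Rightarrow$ (3) because $\phi_\omega(n)$ is a set of integers contained in an interval of length at most $M_2$, so $|\phi_\omega(n)| \le M_2 + 1$. The two substantive directions are (3) $\Rightarrow$ (2) and (1) $\Rightarrow$ (2).

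For (3) $\Rightarrow$ (2), I would exploit the finiteness of $S$. Setting $C = \max\{|s| : s \in S\}$ and $s_n(i) = \sum_{k=i}^{i+n-1} w_k$, one has $|s_n(i+1) - s_n(i)| = |w_{i+n} - w_i| \le 2C$. The range of the infinite walk $(s_n(i))_{i \ge 1}$ is exactly $\phi_\omega(n)$, which by hypothesis has at most $M_3$ elements. Sorting these as $a_1 < a_2 < \cdots < a_r$ with $r \le M_3$, each gap $a_{k+1} - a_k$ must be at most $2C$; otherwise the walk, taking steps of size at most $2C$ and staying inside $\phi_\omega(n)$, could never cross from values $\le a_k$ to values $\ge a_{k+1}$, contradicting the fact that every $a_l$ is visited. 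Hence $\mathrm{diam}(\phi_\omega(n)) \le 2C(M_3 - 1)$, giving (2) with $M_2 = 2C(M_3 - 1)$.

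For the harder direction (1) $\Rightarrow$ (2), I would take length-$n$ factors $B_1, B_2$ at positions $i < j$ with $d = j - i$, and consider the enclosing factor $F = w_i \cdots w_{j+n-1}$ of length $d + n$. A parity fix (padding $B_1$ and $B_2$ with one extra letter on the right when $d + n$ is odd, costing an additive $2C$) reduces to the case when $d + n$ is even. Split $F$ into halves $F_1, F_2$ of length $(d+n)/2$; condition (1) gives $|\sum F_1 - \sum F_2| \le M_1$. When $d \ge n$, decompose $F_1 = B_1 \cdot X$ and $F_2 = Y \cdot B_2$ with $|X| = |Y| = (d-n)/2$. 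The crucial observation is that $X$ and $Y$ are themselves adjacent equal-length blocks inside $F$, so condition (1) applies a second time to yield $|\sum X - \sum Y| \le M_1$; the identity $\sum F_1 - \sum F_2 = (\sum B_1 - \sum B_2) + (\sum X - \sum Y)$ then gives $|\sum B_1 - \sum B_2| \le 2 M_1$. The overlap case $d < n$ is handled by the rewriting $\sum B_1 - \sum B_2 = \sum (w_i \cdots w_{j-1}) - \sum (w_{i+n} \cdots w_{j+n-1})$, which swaps the roles of length and distance and reduces to the previous case. I expect the main obstacle to be the bookkeeping of these cases together with the parity correction; but all of them combine uniformly to the bound $|\sum B_1 - \sum B_2| \le 2 M_1 + 2C$, establishing (2).
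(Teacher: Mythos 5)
Your proof is correct and follows essentially the same route as the paper: the sliding-window/step-size argument for $3 \Rightarrow 2$, the interval containment for $2 \Rightarrow 3$, and, for $1 \Rightarrow 2$, two applications of condition 1 --- once to the two halves of the word enclosing $B_1$ and $B_2$, and once to the two halves of the gap between them --- combined by the same telescoping identity (the paper absorbs the odd-length discrepancy into a $+\max S$ correction instead of padding on the right, which is only a cosmetic difference and yields the same bound $2M_1 + O(1)$). The one substantive difference is that you explicitly treat the case where $B_1$ and $B_2$ overlap, by rewriting $\sum B_1 - \sum B_2$ as the difference of the sums over the two non-overlapping end segments; the paper's proof of $1 \Rightarrow 2$ silently assumes $B_1$ and $B_2$ are disjoint with a nonempty gap, so on this point your version is the more complete one.
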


\begin{proof}

We will show that $1\Leftrightarrow 2$ and $2\Leftrightarrow 3.$\

  Clearly $2 \Rightarrow 1$.  Now assume that $1$ holds, that is, if $B_1B_2$ is any factor of $\omega$ with $|B_1| = |B_2|,$ it is the case that $|\sum B_1 - \sum B_2| \le M_1.$ Now let $B_1$ and $B_2$ be factors of $\omega$ with $|B_1| = |B_2|,$ and assume that $B_1$ and $B_2$ are non-adjacent, with $B_1$ to the left of $B_2$.  

Thus, assume that $$B_1A_1A_2B_2$$

 is a factor of $\omega$, where 
 
 $$|A_1| = |A_2|\  or\  |A_1| = |A_2|+1.$$  
 
 Let $$C_1 = B_1A_1, C_2 = A_2B_2.$$  Then $$|C_1| = |C_2|\ or \ |C_1| = |C_2|+1.$$

Now $$\sum C_1 - \sum C_2 = (\sum B_1 + \sum A_1) - (\sum A_2 + \sum B_2),$$ 

or $$\sum B_1 -\sum B_2 = (\sum C_1 - \sum C_2) + (\sum A_2 - \sum A_1).$$

Therefore, since $A_1, A_2$ and $C_1, C_2$ are adjacent, we have $$ |\sum A_2 - \sum A_1| \le M_1+\max S, \ \  |\sum C_1 - \sum C_2|\le M_1+\max S,$$

and  $$|\sum B_1 -\sum B_2| \le 2M_1 + 2\max S,$$ 
so that we can take $M_2 = 2M_1 + 2\max S.$  Thus $1 \Rightarrow 2.$\\

Next we show that $2\Rightarrow 3.$  Thus we assume there exists $M_2$ such that whenever $B_1,B_2$ are factors of $\omega$ (not necessarily adjacent) with $|B_1| = |B_2|,$ it is the case that $|\sum B_1 - \sum B_2| \le M_2.$\

Let $n$ be given, and let $\sum B_1 = \min \phi_\omega(n).$  Then for any $B_2$ with $|B_2| = n,$ we have $\sum B_2 = \sum B_1 + (\sum B_2 - \sum B_1).$  Therefore $\sum B_2 \le \sum B_1 + M_2.$  This means that $\phi_\omega(n) \subset [\sum B_1, \sum B_1 + M_2],$ so that $|\phi_{\omega}(n)| \le M_2 + 1.$\\

Finally, we show that $3\Rightarrow 2.$  We assume there exists $M_3$ such that $|\phi_{\omega}(n)| \le M_3$ for all $n \ge 1$.  Suppose that  $B_1$ and $B_2$ are factors of $\omega$  such that $|B_1|=|B_2|= n$ and $\sum B_1 = \min \phi_{\omega}(n)$, $\sum B_2=\max \phi_{\omega}(n).$  To simplify the notation, for all $a \le b$ let $\omega[a,b]$ denote $x_ax_{a+1}\dots x_b$, and let us assume that $B_1 = \omega[1, n], B_2 = \omega[q+1, q+n],$ where $q>1.$ \  
  
  For each $i, 0 \le i \le q,$ let $b_i$ denote the factor $\omega[i+1, i+n].$  Thus $B_1 = b_0, B_2 = b_{q},$ and the factor $b_{i+1}$ is obtained by shifting $b_i$ one position to the right. Clearly $$\sum b_{i+1} - \sum b_i \le \max S - \min S.$$\
  
  Since $|b_0|= |b_1|= \cdots = |b_{q}| = n,$ and $|\phi_{\omega}(n)| \le M_3,$   there can be at most $M_3$ distinct numbers in the sequence $\sum B_1 = \sum b_0, \sum b_1, \dots, \sum b_{q}=\sum B_2.$  Let these numbers be $$\sum B_1=c_1 < c_2 < \cdots < c_r = \sum B_2,$$ where $r \le M_3.$\  
  
  Since $\sum b_{i+1} - \sum b_i \le \max S - \min S, \ 0 \le i \le q,$ it follows that $c_{j+1} - c_j \le \max S - \min S ,\ 0 \le i \le r-1,$ and hence that $$|\sum B_1 - \sum B_2| \le (M_3-1)(\max S - \min S).$$
  
  \end{proof}
  
  
  \begin{thm}

Let $\omega$ be an infinite word on  a finite subset of $\mathbb{Z}$. Assume that $\omega$ has bounded additive complexity. Then $\omega $ contains an additive $k$-power for every positive integer $k$.  

\end{thm}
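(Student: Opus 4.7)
The plan is to combine van der Waerden's theorem with condition 2 of Proposition 2.1, whose key feature is that the bound $M_2$ on the range of length-$n$ factor sums is independent of $n$. Write $\omega = x_1 x_2 \cdots$, set $s_0 = 0$ and $s_i = x_1 + x_2 + \cdots + x_i$, and fix an integer $k \geq 1$. Let $M_2$ be as in Proposition 2.1(2), and set $N = M_2 + 1$.

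I would then color each nonnegative integer $i$ by the residue $s_i \bmod N$, obtaining an $N$-coloring of $\mathbb{N}_0$. By van der Waerden's theorem, there exist $a \geq 0$ and $d \geq 1$ such that $s_a, s_{a+d}, s_{a+2d}, \ldots, s_{a+kd}$ are all congruent modulo $N$. For $j = 1, \ldots, k$ define the $k$ adjacent length-$d$ factors $B_j = \omega[a+(j-1)d+1, a+jd]$. Then $\sum B_j = s_{a+jd} - s_{a+(j-1)d}$ is divisible by $N$ for every $j$.

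On the other hand, Proposition 2.1(2) ensures that every sum of a length-$d$ factor of $\omega$ lies in a common interval of length at most $M_2$; in particular, all the $\sum B_j$ do. Since this interval has length $M_2 < N$, it contains at most one multiple of $N$, so the values $\sum B_1, \sum B_2, \ldots, \sum B_k$ must all coincide. Hence $B_1 B_2 \cdots B_k$ is an additive $k$-power, as required.

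I do not anticipate serious difficulty: the only substantive insight is that the uniform-in-$n$ bound from Proposition 2.1(2) allows the residue pigeonhole following van der Waerden to collapse the congruence class to a single value, which is the content of the comparison $M_2 < N$. The choice of modulus $N$ must strictly exceed $M_2$, but otherwise the argument is essentially a one-line combination of VDW with the proposition.
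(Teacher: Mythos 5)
Your proposal is correct and is essentially identical to the paper's proof: both color the partial sums modulo $M_2+1$, apply van der Waerden to get an arithmetic progression of indices with congruent partial sums, and then use the uniform bound $|\sum B_i - \sum B_j| \le M_2$ from Proposition 2.1(2) to upgrade the congruence to equality. The only cosmetic difference is that you phrase the last step via "all $\sum B_j$ are multiples of $N$ in an interval of length $M_2 < N$" rather than "congruent and within $M_2$ of each other," which is the same observation.
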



  \begin{proof}
  
  Let $\omega = x_1x_2x_3 \cdots$ be an infinite word on the finite subset $S$ of $\mathbb{Z}$, and assume that whenever $B_1,B_2$ are factors of $\omega$ (not necessarily adjacent) with $|B_1| = |B_2|,$ then $|\sum B_1 - \sum B_2| \le M_2.$\  (This is from part 2 of Proposition 2.1.)

  Define the function $f$ from $\mathbb{N}$ to $\{0, 1, 2, \dots, M_2\}$ by $$f(n)=x_1 + x_2 + x_3 + \cdots + x_n  \  \pmod{M_2 + 1}, \ \ n \ge 1 .$$\
  
  This is a finite coloring of $\mathbb{N}$; by van der Waerden's theorem, for any $k$ there are $t,s$ such that $$f(t)=f(t+s)=f(t+2s)=\cdots f(t+ks).$$
  
  Setting $$ B_i= \omega[t+(i-1)s+1, t+is], \ \ 1 \le i \le k,$$
  we have $$\sum B_1 \equiv \sum B_2 \equiv \cdots \equiv \sum B_k \pmod{M_2 + 1}.$$  \
  
  Since $B_1B_2\cdots B_k$ is a factor of $\omega$ with $|B_i| = |B_j|, 1 \le i <j \le k,$ we have $|\sum B_i - \sum B_j| \le M_2$ and $\sum B_i \equiv \sum B_j \pmod{M_2 + 1},$ hence $\sum B_i = \sum B_j.$  \

  Thus $|B_1| = |B_2|=\cdots = |B_k|$ and $\sum B_1=\sum B_2=\cdots=\sum B_k,$ and $\omega$ contains the additive $k$-power $B_1B_2\cdots B_k$.
  
  \end{proof}\
  
  
  \subsection{Infinite words on  subsets of $\mathbb{Z}^m$}

  Let us use the notation $(u)_j$ for the $jth$ coordinate of $u \in \mathbb{Z}^m.$  That is, if $u = (u_1, \dots, u_m)$ then $(u)_j = u_j.$  Also, $|u| = |(u_1, \dots, u_m)|$ denotes the vector $(|u_1|, \dots, |u_m)|).$  In other words, $(|u|)_j = |(u)_j|.$\
  
  For factors $B_1, B_2$ of an infinite word $\omega$ on a finite subset $S$ of $\mathbb{Z}^m$, the notation $|\sum B_1 - \sum B_2| \le M_1$ means that $(|\sum B_1 - \sum B_2|)_j \le M_1,\  1 \le j \le m.$\

  Now we suppose that $\omega$ is an infinite word on a finite subset $S$ of $\mathbb{Z}^m$ for some $m \ge 1.$  The definition of $\phi_\omega$ and the additive complexity of $\omega$ is exactly as in Definition 1.1 above.  The function $$\phi_\omega(n) = \{ \sum  B :  \mbox{ $B$ is a factor of $\omega$ with length $n$}\}$$ is called the \textit{additive complexity} of the word $\omega.$\\
  
  By working with the coordinates $(B_1)_j, (|\sum B_1 - \sum B_2|)_j,$ we easily obtain the following results.   
  
    
  \begin{prop}  
  
  Proposition 2.1  remains true when $\mathbb{Z}$ is replaced by $\mathbb{Z}^m$.
  
  \end{prop}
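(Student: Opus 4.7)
The plan is to reduce the vector-valued statement to the one-dimensional Proposition 2.1 by projecting onto each coordinate. For each $j \in \{1,\dots,m\}$ I would introduce the auxiliary word $\omega^{(j)} = (x_1)_j (x_2)_j (x_3)_j \cdots$ on the finite alphabet $S^{(j)} = \{(s)_j : s \in S\} \subset \mathbb{Z}$. The single essential observation is that taking the sum of a factor commutes with coordinate projection: if $B$ is a factor of $\omega$ and $B^{(j)}$ is the corresponding factor of $\omega^{(j)}$, then $(\sum B)_j = \sum B^{(j)}$.

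With this in hand, conditions 1 and 2 translate immediately: under the coordinate-wise convention for $|\sum B_1 - \sum B_2| \le M$ stated in the excerpt, condition~1 (respectively~2) holds for $\omega$ with constant $M$ if and only if condition~1 (respectively~2) of Proposition~2.1 holds for each $\omega^{(j)}$ with constant $M$. Applying the already proved implication $1 \Rightarrow 2$ of Proposition~2.1 to each $\omega^{(j)}$ and taking the maximum of the resulting constants $M_2^{(j)}$ gives $1 \Rightarrow 2$ in the vector case; the reverse implication is trivial.

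For condition~3, $|\phi_\omega(n)| \le M_3$, a bit more care is needed since $\phi_\omega(n)$ is a set of vectors rather than a collection of coordinate bounds. Here I would use the two inequalities
\[
|\phi_{\omega^{(j)}}(n)| \;\le\; |\phi_\omega(n)| \;\le\; \prod_{j=1}^{m} |\phi_{\omega^{(j)}}(n)|,
\]
the first because the coordinate projection $\phi_\omega(n) \to \phi_{\omega^{(j)}}(n)$ is surjective, the second because the combined projection $\phi_\omega(n) \to \prod_j \phi_{\omega^{(j)}}(n)$ is injective. Consequently $|\phi_\omega|$ is uniformly bounded on $\mathbb{N}$ if and only if each $|\phi_{\omega^{(j)}}|$ is, and the equivalence $2 \Leftrightarrow 3$ of Proposition~2.1 applied coordinate-wise then closes the loop.

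The main (mild) obstacle is exactly this last point: condition~3 is a single cardinality bound rather than a coordinate-wise bound, so one cannot literally apply Proposition~2.1 to $\omega$ one coordinate at a time without the product estimate above. Once that estimate is in place, the rest of the proof is a mechanical bookkeeping of constants across the $m$ coordinates.
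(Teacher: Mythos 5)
Your proof is correct and follows essentially the same route as the paper, which simply says the result is obtained ``by working with the coordinates'' and leaves the details to the reader. You have supplied exactly those details, including the one genuinely non-automatic point (relating the single cardinality bound $|\phi_\omega(n)|$ to the coordinate-wise bounds via the surjective projections and the injective product map), and that argument is sound.
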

  
  
 \begin{thm}Let $\omega$ be an infinite word on  a finite subset of $\mathbb{Z}^m$ for some $m \ge 1$. Assume that $\omega$ has bounded additive complexity. Then $\omega$ contains an additive $k$-power for every positive integer $k$.   
 \end{thm}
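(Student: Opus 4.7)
The plan is to mimic the proof of Theorem 2.1 essentially verbatim, replacing the scalar-valued partial sums and the coloring modulo $M_2+1$ with their coordinate-wise vector analogues. The authors have already flagged this: ``by working with the coordinates $(B_1)_j, (|\sum B_1 - \sum B_2|)_j,$ we easily obtain the following results.''

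First I would invoke Proposition 2.2 to fix an integer $M_2$ such that whenever $B_1, B_2$ are factors of $\omega$ with $|B_1|=|B_2|$, we have $(|\sum B_1-\sum B_2|)_j \le M_2$ for each $1\le j\le m$. Next I would define a coloring $f\colon \mathbb{N}\to \{0,1,\dots,M_2\}^m$ by
\[
f(n) \;=\; \bigl(x_1+x_2+\dots+x_n\bigr) \pmod{M_2+1},
\]
where the reduction is applied coordinate-wise. This is a coloring of $\mathbb{N}$ by the finite set $\{0,1,\dots,M_2\}^m$, which has $(M_2+1)^m$ elements.

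Given $k\ge 1$, van der Waerden's theorem then yields $t,s \ge 1$ with $f(t)=f(t+s)=\dots=f(t+ks)$. Setting $B_i=\omega[t+(i-1)s+1,\,t+is]$ for $1\le i\le k$, each $B_i$ has length $s$, and the choice of $f$ guarantees
\[
\textstyle\sum B_1 \equiv \sum B_2 \equiv \cdots \equiv \sum B_k \pmod{M_2+1}
\]
coordinate-wise. On the other hand, since the $B_i$ have equal length, Proposition 2.2 gives $(|\sum B_i-\sum B_j|)_\ell \le M_2$ for every coordinate $\ell$. Combining the coordinate-wise congruence modulo $M_2+1$ with this coordinate-wise bound of $M_2$ forces $\sum B_i = \sum B_j$ as vectors in $\mathbb{Z}^m$, so $B_1B_2\cdots B_k$ is an additive $k$-power.

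There is no real obstacle; the only point to check is that the color set remains finite when we pass from $\mathbb{Z}$ to $\mathbb{Z}^m$, which it does since $(M_2+1)^m$ is finite, so van der Waerden's theorem still applies. Everything else is a literal coordinate-wise rerun of the proof of Theorem 2.1.
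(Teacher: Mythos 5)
Your proposal is correct and is exactly the argument the paper intends: the paper leaves Theorem 2.4 to the remark that one ``works with the coordinates,'' and your coordinate-wise coloring by $\{0,1,\dots,M_2\}^m$ followed by van der Waerden is the literal realization of that. The key closing step (a coordinate-wise congruence modulo $M_2+1$ together with a coordinate-wise bound of $M_2$ forces equality of the sum vectors) is handled correctly.
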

 
 The following is a re-statement of Theorem 2.4, in terms of $m$ infinite words on $\mathbb{Z},$ rather than one infinite word on $\mathbb{Z}^m.$
 
 
\begin{thm}
Let $m\in \mathbb{N}$ be given, and let $S_1, S_2, \dots, S_m$ be finite subsets of $\mathbb{Z}.$  Let $\omega_j$ be an infinite word on $S_j$ with bounded additive complexity, $1 \le j \le m.$  Then for all $k \ge 1$, there exists a $k$-term arithmetic progression in $\mathbb{N},t, t+s, t+2s, \dots, t+ks$ such that for all $j, 1 \le j \le m,$
$$\sum \omega_j[t+1, t+s] = \sum \omega_j[t+s+1, t+2s] = \cdots = \sum \omega_j[t+(k-1)s+1, t+ks].$$\
Thus $\omega_1, \omega_2, \cdots, \omega_m$ have ``simultaneous" additive $k$-powers for all $k \ge 1.$
\end{thm}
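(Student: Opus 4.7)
The natural plan is to bundle the $m$ words into a single word on $\mathbb{Z}^m$ and invoke Theorem~2.4. Specifically, writing $\omega_j = x_1^{(j)} x_2^{(j)} x_3^{(j)} \cdots$ for each $j$, define an infinite word $\omega$ on the finite set $S = S_1 \times S_2 \times \cdots \times S_m \subset \mathbb{Z}^m$ by
$$\omega = y_1 y_2 y_3 \cdots, \qquad y_n = \bigl(x_n^{(1)}, x_n^{(2)}, \dots, x_n^{(m)}\bigr).$$
Every factor $B$ of $\omega$ of length $n$ has the form $B = (B^{(1)}, \dots, B^{(m)})$ where $B^{(j)}$ is the corresponding length-$n$ factor of $\omega_j$, and by definition $(\sum B)_j = \sum B^{(j)}$.

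Next I would verify that $\omega$ has bounded additive complexity in the sense of Proposition~2.3. By hypothesis each $\omega_j$ satisfies condition~2 of Proposition~2.1 with some bound $M_2^{(j)}$. Setting $M = \max_{1 \le j \le m} M_2^{(j)}$, any two factors $B,B'$ of $\omega$ of equal length satisfy, coordinatewise,
$$\bigl(|{\textstyle\sum} B - {\textstyle\sum} B'|\bigr)_j = |{\textstyle\sum} B^{(j)} - {\textstyle\sum} B'^{(j)}| \le M_2^{(j)} \le M,$$
which is precisely the $\mathbb{Z}^m$-version of condition~2 in Proposition~2.3.

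Now I would apply Theorem~2.4 to $\omega$. For the given $k$, it yields an additive $k$-power $B_1 B_2 \cdots B_k$ inside $\omega$, that is, a factor with $|B_1| = \cdots = |B_k|$ and $\sum B_1 = \sum B_2 = \cdots = \sum B_k$ as vectors in $\mathbb{Z}^m$. Let $s$ be the common length and let $t$ be the index so that $B_i = \omega[t+(i-1)s+1, t+is]$. Reading off the $j$-th coordinate of each vector equality recovers
$$\sum \omega_j[t+1, t+s] = \sum \omega_j[t+s+1, t+2s] = \cdots = \sum \omega_j[t+(k-1)s+1, t+ks]$$
simultaneously for all $j$, as required.

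There is no genuine obstacle here: once the coordinatewise generalization (Proposition~2.3 and Theorem~2.4) is in hand, the only point that requires attention is noticing that ``bounded additive complexity'' is preserved under the zipping construction, because the $\mathbb{Z}^m$-version of condition~2 is itself a coordinatewise statement. The van der Waerden step which did the real work is already absorbed into Theorem~2.4, so this theorem is essentially a reformulation.
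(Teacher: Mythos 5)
Your proof is correct and matches the paper's intent exactly: the paper presents this theorem as a ``re-statement of Theorem 2.4'' with no separate argument, and the zipping of the $\omega_j$ into a single word on a finite subset of $\mathbb{Z}^m$, together with the coordinatewise check of bounded additive complexity, is precisely the bridge the authors have in mind. Nothing further is needed.
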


  
  \section{Abelian complexity}
  
  
  \begin{defn}
    Let $\omega$ be an infinite word on a finite alphabet.  Two factors of $\omega$  are called \textit{abelian equivalent} if one is a permutation of the other.  If the alphabet is $A = \{a_1,a_2,\dots, a_t\},$ and the finite word $B$ is a factor of $\omega,$ we write $\psi(B) = (u_1, u_2, \dots, u_t),$ where $u_i$ is the number of occurrences of the letter $i$ in the word $B, 1 \le i \le t.$  We call $\psi(B)$ the \textit{Parikh vector} associated with $B$.\
    
    Let $\psi_{\omega}(n) = \{\psi(B): \mbox{$B$ is a factor of $\omega, |B| = n$}\}$.  The function $\rho_{\omega}^{ab},$ defined by $\rho_{\omega}^{ab}(n) = |\psi_{\omega}(n)|, n \ge 1,$ is called the \textit{abelian complexity} of $\omega$.\
    
    Thus $\rho_{\omega}^{ab}(n)$ is the largest number of factors of $\omega$ of length $n$, no two of which are abelian equivalent. If there exists $M$ such that $\rho_{\omega}^{ab}(n) \le  M$ for all $n \ge 1,$ then $\omega$ is said to have \textit{bounded abelian complexity}.     \

The word $B_1B_2\cdots B_k$ is called an \textit{abelian $k$-power} if $B_1,$ $B_2,$ $\dots, B_k$  are pairwise abelian equivalent.  (Being abelian equivalent, they all have the same length.)

    Recall that we are using the notation $|(u_1,u_2, \dots, u_t)| \le M$ to denote $|u_i| \le M, 1 \le i \le t.$
\end{defn}


\begin{prop}  Let $\omega$ be an infinite word on a $t$-element alphabet $S$.  Then the following three statements are equivalent.\

1.  There exists $M_1$ such that if $B_1B_2$ is a factor of $\omega$ with $|B_1| = |B_2|,$ then $|\psi(B_1) - \psi(B_2)| \le M_1.$\

2. There exists $M_2$ such that if $B_1,B_2$ are factors of $\omega$ (not necessarily adjacent) with $|B_1| = |B_2|,$ then $|\psi(B_1) - \psi(B_2)| \le M_2.$\

3.  There exists $M_3$ such that such that $\rho_{\omega}^{ab}(n) \le M_3$ for all $n \ge 1$.

\end{prop}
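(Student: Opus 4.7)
The plan is to deduce this proposition directly from Proposition 2.3 (the $\mathbb{Z}^m$ version of Proposition 2.1) via the classical identification of the alphabet with the standard basis of $\mathbb{Z}^t$, so that Parikh vectors become ordinary sums of vectors.

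First I would define the map $\sigma : S \to \mathbb{Z}^t$ sending $a_i \mapsto e_i$, where $e_1, \ldots, e_t$ are the standard basis vectors of $\mathbb{Z}^t$. Extending $\sigma$ letterwise carries the given infinite word $\omega$ on $S$ to an infinite word $\omega' = \sigma(\omega)$ on the finite subset $\{e_1, \ldots, e_t\}$ of $\mathbb{Z}^t$, and a factor $B$ of $\omega$ corresponds bijectively to a factor $B' = \sigma(B)$ of $\omega'$ with $|B'| = |B|$.

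The key identity is $\psi(B) = \sum B'$. Indeed, if the letter $a_i$ occurs $u_i$ times in $B$, then
$$\sum B' \;=\; \sum_{i=1}^{t} u_i \, e_i \;=\; (u_1, u_2, \ldots, u_t) \;=\; \psi(B).$$
In particular $\psi(B_1) - \psi(B_2) = \sum B_1' - \sum B_2'$ for any two factors, and counting factors of $\omega'$ by $\sum B'$ is the same as counting factors of $\omega$ by $\psi(B)$.

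Under this dictionary, conditions 1, 2, 3 of Proposition 2.6 translate verbatim into conditions 1, 2, 3 of Proposition 2.3 applied to $\omega'$, so the equivalences are immediate. I expect no genuine obstacle: the only work is to check that the three conditions transport correctly under $\sigma$, and this is a routine verification because $\sigma$ preserves lengths and turns $\psi$ into $\sum$. The substantive combinatorial content — the padding trick for $1 \Rightarrow 2$ and the sliding-window argument for $3 \Rightarrow 2$ — has already been carried out in Proposition 2.1 and extended coordinatewise in Proposition 2.3.
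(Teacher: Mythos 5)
Your reduction is correct, and it is a genuinely different route from the paper's. The dictionary $\sigma(a_i)=e_i$, $\psi(B)=\sum\sigma(B)$ does transport all three conditions faithfully: $\sigma$ preserves lengths and adjacency of factors, $\psi_\omega(n)=\phi_{\omega'}(n)$ as sets so $\rho_\omega^{ab}(n)=|\phi_{\omega'}(n)|$, and the componentwise convention for $|u|\le M$ is the same in both propositions, so the statement really is Proposition 2.3 applied to $\omega'$. The paper instead proves the proposition directly, rerunning the three arguments of Proposition 2.1 in the Parikh setting: the padding trick for $1\Rightarrow 2$ (noting $|\psi(A_1)-\psi(A_2)|\le M_1+1$), a crude counting bound $\rho_\omega^{ab}(n)\le(2M_2-1)^t$ for $2\Rightarrow 3$, and for $3\Rightarrow 2$ an intermediate-value argument based on $|\psi(xB)-\psi(By)|\le 1$. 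Your route is more economical and makes explicit the same correspondence the paper itself exploits only later (Definition 3.2 and Theorem 3.3, where a word $\omega'$ over $\mathbb{Z}^m$ is built from $\omega$); your choice of $e_i$ rather than the paper's $a_ie_i$ is the right one here, since in this proposition $S$ is an abstract alphabet rather than a subset of $\mathbb{Z}$ (and it avoids the degenerate case where some letter equals $0$). Two small caveats: Proposition 2.3 is itself only asserted in the paper (``by working with the coordinates \dots we easily obtain''), so your proof inherits whatever verification remains there; and you should say a word confirming that the constants survive the translation --- for the image alphabet $\{e_1,\dots,e_t\}$ the per-coordinate step bound $\max - \min$ equals $1$, so the bound $(M_3-1)(\max S-\min S)$ from Proposition 2.1/2.3 specializes to $M_3-1$, matching what the paper's direct argument gives.
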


\begin{proof}  We show that $1\Leftrightarrow 2$ and $2\Leftrightarrow 3.$\

  Clearly $2 \Rightarrow 1$.  Now assume that $1$ holds, that is, if $B_1B_2$ is any factor of $\omega$ with $|B_1| = |B_2|,$ it is the case that $|\psi(B_1) - \psi(B_2)| \le M_1.$ Now let $B_1$ and $B_2$ be factors of $\omega$ with $|B_1| = |B_2|,$ and assume that $B_1$ and $B_2$ are non-adjacent, with $B_1$ to the left of $B_2$.  

Thus, assume that $$B_1A_1A_2B_2$$

 is a factor of $\omega$, where 
 
 $$|A_1| = |A_2|\  or\  |A_1| = |A_2|+1.$$ 
 Now we proceed exactly as in the proof of  $1 \Rightarrow 2$ in Proposition 2.1, noting that $|\psi(A_1) - \psi(A_2)| \le M_1 + 1.$\\
 
 Next we show that $2\Rightarrow 3.$  Thus we assume there exists $M_2$ such that whenever $B_1,B_2$ are factors of $\omega$ (not necessarily adjacent) with $|B_1| = |B_2|,$ it is the case that $|\psi(B_1) - \psi(B_2)| \le M_2.$\

Let $n$ be given, and let $B_1 \in \psi_{\omega}(n).$  Then for any $B_2$ with $|B_2| = n,$ we have $\psi(B_2) = \psi(B_1) + (\psi(B_2) - \psi(B_1)).$  Therefore $|\psi(B_2)| \le |\psi(B_1)| + M_2.$  (This inequality is component-wise, that is,  $(|\psi(B_2)|)_j \le (|\psi(B_1)|)_j + M_2, 1 \le j \le t.$)\

Therefore there are at most $2M_2-1$ choices for each component of $B_2$, and hence $\rho_{\omega}^{ab}(n) \le (2M_2-1)^t.$\\

Finally, we show that $3\Rightarrow 2.$  We assume there exists $M_3$ such that $\rho_{\omega}^{ab}(n) \le M_3$ for all $n \ge 1$.  \

Since $|\psi(xB) - \psi(By)| \le 1$ for all $x,y \in S$, it follows that if $\omega$ has factors $B_1, B_2$  of length $n$ where for some $j, 1 \le j \le t, (\psi(B_1))_j = p$ and $(\psi(B_2))_j = p+q$, then $\omega$ has factors $C_r$ of length $n$ with $(\psi(C_r))_j = p+r, 0 \le r \le q.$  (This is discussed in more detail in \cite{RSZ}.)  Thus $|\psi(B_1) - \psi(B_2)| \ge M_3$ implies $\rho_{\omega}^{ab}(n) \ge M_3 + 1.$  Since we are assuming $\rho_{\omega}^{ab}(n) \le M_3, n \ge 1,$ we conclude that $|\psi(B_1) - \psi(B_2)| \le M_3-1$ whenever $|B_1| = |B_2|$.  Hence $|\psi(B_1) - \psi(B_2)| \le M_3 - 1$ whenever $|B_1| = |B_2|.$

\end{proof}


    \begin{rem}  
    
  To see that bounded sum complexity is indeed weaker than bounded abelian complexity, consider the following example. Let $\sigma = x_1x_2x_3 \cdots$ be the binary sequence constructed by Dekking [2] which has no abelian 4th power.  In $\sigma,$ replace every 1 by 12, and replace every 0 by 03, obtaining the sequence $\tau.$  If $\tau$ had an abelian 4th power $ABCD$, then the number of 2s in each of $A,B,C,D$ are equal, and similarly for the number of 3s.  But then dropping the 2s and 3s from $ABCD$ would give an abelian 4th power in $\sigma$, a contradiction.  Hence $\tau$ does not have bounded abelian complexity.  Now let a factor $B$ of $\tau$ be given.  By shifting $B$ to the right or left, we see, by examining cases, that if $|B|$ is even then $\sum B = \frac{3}{2}|B| + s,$ where $s \in \{-1,0,1\}.$  If $|B|$ is odd, then $\sum B = \frac{3}{2}(|B|-1)+s,$ where $s \in \{0,1,2,3\}.$  Hence $|\phi_{\tau}(n)| \le 4$ for all $n \ge 1,$ and $\tau$ does have bounded sum complexity.

  \end{rem}
  
  
    \begin{defn}
  
  Let $S=\{a_1, a_2, \dots, a_m\}$ be a subset of $\mathbb{Z}$, and let $\omega = x_1x_2x_3\cdots$ be an infinite word on the alphabet S.  For each $j, 1  \le j \le m,$ let $a_j'$ be the element of $\mathbb{Z}^m$ which has $a_j$ in the in the $jth$ coordinate and $0's$ elsewhere.  Let $\omega' = x_1'x_2'x_3'\cdots$ be the word on the subset $S'$ of $\mathbb{Z}^m, S'  = \{a_1', a_2', \dots, a_m'\},$ obtained from $\omega$ by replacing each $a_j$ by $a_j', \ 1  \le j \le m.$  It is convenient to visualize each $a_j'$ as a column vector, rather than as a row vector.\\
  
  \end{defn}
  
  
  \begin{thm}

  Referring to Definition 2.2, consider the following statements concerning $\omega$ and $\omega'$:\\  
  
  1.   $\omega$ has bounded abelian complexity.\  
  
  2.   $\omega'$ has bounded abelian complexity.\
  
  3.   $\omega'$ has bounded additive complexity.\
  
  4.   $\omega'$ contains an additive $k$-power for all $k \ge 1.$\

  5.   $\omega'$ contains an abelian $k$-power or all $k \ge 1$, \
  
  6.   $\omega$ contains an abelian $k$-power for all $k \ge 1$ \\
    
  Then $1 \Leftrightarrow 2 \Leftrightarrow 3$, $4 \Leftrightarrow 5 \Leftrightarrow 6,$ $3 \Rightarrow 4,$ and  $4 \nRightarrow 3$  \\
  
  \end{thm}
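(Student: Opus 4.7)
The proof will hinge on two elementary identities linking $\omega$ and $\omega'$. First, since the rule $a_j \mapsto a_j'$ is a bijection between $S$ and $S'$, each factor $B$ of $\omega$ has the same Parikh vector (under the natural identification of index sets) as the corresponding factor of $\omega'$. Second, from the construction of $\omega'$, for any factor $B$ of $\omega'$ one has $(\sum B)_j = a_j\,(\psi(B))_j$ for $1 \le j \le m$. These two identities will drive essentially every implication in the theorem.

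The equivalences $1 \Leftrightarrow 2$ and $5 \Leftrightarrow 6$ follow immediately from the first identity, since abelian equivalence and bounded abelian complexity are defined purely through Parikh vectors. For $2 \Leftrightarrow 3$, I would use characterization (2) from Propositions 2.3 and 3.2: the second identity converts the componentwise bound $|\psi(B_1)-\psi(B_2)| \le M$ on same-length factors of $\omega'$ into $|\sum B_1 - \sum B_2| \le (\max_j|a_j|)\,M$, and conversely turns a bound on $|\sum B_1 - \sum B_2|$ into one on $|\psi(B_1)-\psi(B_2)|$ by coordinate-wise division. The analogous multiplication/division argument handles $4 \Leftrightarrow 5$ (an abelian $k$-power forces agreement of Parikh vectors, which multiplies out to agreement of sums, and conversely). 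The one subtlety---really the only obstacle---is that $a_{j_0} = 0$ may hold for some (necessarily unique, since $S$ is a set) index $j_0$; then the $j_0$-coordinate of $\sum B$ is identically zero and the division step fails there. I would handle this by invoking the common length: if $|B_1| = |B_2|$ then $\sum_j (\psi(B_1)-\psi(B_2))_j = 0$, so $(\psi(B_1)-\psi(B_2))_{j_0}$ is determined by, and bounded by a sum of, the other $m-1$ coordinates where $a_j \ne 0$. The same length trick upgrades an additive $k$-power in $\omega'$ to an abelian $k$-power in the $j_0$-coordinate.

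The implication $3 \Rightarrow 4$ is exactly Theorem 2.5 applied to $\omega'$. For $4 \nRightarrow 3$, I would give a direct counterexample: take $S = \{0,1\}$ and let $\omega = 0\,1\,0\,0\,1\,0\,0\,0\,1\,\cdots\,0^n\,1\,\cdots$. Because $\omega$ contains the factor $0^k$ for every $k\ge 1$, the word $\omega'$ contains the factor $(0,0)(0,0)\cdots(0,0)$ of length $k$, which is a trivial additive $k$-power (each length-one block has sum $(0,0)$). On the other hand $\omega$ has unbounded abelian complexity---factors of length $n$ range from $0^n$ up to factors containing on the order of $\sqrt n$ copies of $1$---so by the already-established equivalence $2 \Leftrightarrow 3$, $\omega'$ fails to have bounded additive complexity.
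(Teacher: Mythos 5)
Your proof is correct and follows the same overall architecture as the paper's: $1 \Leftrightarrow 2$ and $5 \Leftrightarrow 6$ are immediate from the letter-to-letter bijection, $2 \Leftrightarrow 3$ and $4 \Leftrightarrow 5$ come from the correspondence between the sum of a factor of $\omega'$ and its Parikh vector, and $3 \Rightarrow 4$ is the additive $k$-power theorem over $\mathbb{Z}^m$ applied to $\omega'$. Two points of divergence are worth recording. First, the paper disposes of $2 \Leftrightarrow 3$ and $4 \Leftrightarrow 5$ in one line by appealing to ``the linear independence of $S'$ over $\mathbb{Z}$''; this is literally false when $0 \in S$, since then some $a_{j_0}'$ is the zero vector. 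Your explicit treatment of that coordinate --- recovering $(\psi(B_1)-\psi(B_2))_{j_0}$ from the remaining coordinates via $\sum_j (\psi(B_1)-\psi(B_2))_j = |B_1|-|B_2| = 0$ --- closes a gap the paper steps over silently, and is the more careful argument. Second, for $4 \nRightarrow 3$ the paper argues by contraposition through the already-established equivalences ($4 \Rightarrow 3$ would force $6 \Rightarrow 1$) and refutes $6 \Rightarrow 1$ with the Champernowne word, whose abelian complexity is $n+1$; you instead construct the word $0\,1\,0\,0\,1\,0\,0\,0\,1\cdots$ and verify conditions $4$ and $\neg 3$ on $\omega'$ directly. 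The two counterexamples are interchangeable --- each has arbitrarily long constant runs (hence trivial additive $k$-powers in $\omega'$) and unbounded abelian complexity --- so nothing substantive is gained or lost, though your example is self-contained where the paper's leans on a citation. One small point to tighten: exhibiting only the two extreme factors $0^n$ and a prefix with roughly $\sqrt{2n}$ ones shows $\rho^{ab}_{\omega}(n) \ge 2$, not that it is unbounded; you should add the sliding-window observation (consecutive length-$n$ windows change each Parikh coordinate by at most $1$, as noted in the paper's proof of the abelian proposition) to conclude that every intermediate count of ones is realized, giving $\rho^{ab}_{\omega}(n) \gtrsim \sqrt{n}$.
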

  
  \begin{proof}
  
  Clearly $1 \Leftrightarrow 2$ and $5 \Leftrightarrow 6.$\\
  
    The linear independence of $S'$ over $\mathbb{Z}$ implies that $2 \Leftrightarrow 3$ and $4 \Leftrightarrow 5.$\\

 The implication $3 \Rightarrow 4$ is a special case of the second part of Theorem 2.4.\\
 
   To see that $4 \nRightarrow 3$,  note that if $4 \Rightarrow 3$ then $6 \Rightarrow 1$, which is shown to be false by the Champernowne word \cite{Champ}
  $$C = 01101110010111011110001001 \cdots,$$
obtained by concatenating the binary representations of $0, 1, 2, \dots$ .  This word has arbitrarily long strings of 1's (and 0's), hence satisfies condition 6; but $C$ does not satisfy condition 1.  (Clearly for the sequence $C$, $\rho_{C}^{ab}(n) = n+1$ for all $n \ge 1.$)\\

  \end{proof}
  

  \begin{cor}
  
  Every infinite word with bounded abelian complexity has an abelian $k$-power for every $k$.
  
  \end{cor}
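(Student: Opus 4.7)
The plan is simply to chain the implications already verified in Theorem 2.7. Given an infinite word $\omega$ with bounded abelian complexity, this is exactly condition 1 of that theorem, and my goal is to deduce condition 6, that $\omega$ contains an abelian $k$-power for every $k \ge 1$.

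I would follow the path $1 \Rightarrow 3 \Rightarrow 4 \Rightarrow 6$. First, the equivalence $1 \Leftrightarrow 3$, supplied by the linear independence of $S'$ over $\mathbb{Z}$, converts bounded abelian complexity of $\omega$ into bounded \emph{additive} complexity of the associated word $\omega'$ on the finite subset $S' \subset \mathbb{Z}^m$ from Definition 2.2. Second, the implication $3 \Rightarrow 4$ — a special case of Theorem 2.4 applied to $\omega'$, whose real content is van der Waerden's theorem — produces an additive $k$-power in $\omega'$ for every $k$. Third, the equivalence $4 \Leftrightarrow 6$, once again a consequence of the linear independence of $S'$ (so that equality of coordinate sums in $\omega'$ is exactly equality of Parikh vectors in $\omega$), transfers these additive $k$-powers in $\omega'$ back to abelian $k$-powers in $\omega$.

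The main obstacle has already been dispatched inside Theorem 2.4: van der Waerden's theorem is used to find positions with a common residue modulo $M_2 + 1$, and the hypothesis of bounded additive complexity then forces the integer sum differences (bounded in absolute value by $M_2$) to be exactly zero. Everything remaining for the corollary is bookkeeping via the dictionary between $\omega$ and $\omega'$, so no new argument is needed beyond pointing to Theorem 2.7.
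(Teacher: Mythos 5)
Your proposal is correct and follows exactly the route the paper intends: the corollary is deduced from the six-condition theorem (Theorem 3.2 in the paper's numbering, not 2.7) by chaining $1 \Rightarrow 3 \Rightarrow 4 \Rightarrow 6$, with the linear independence of $S'$ handling the translations between $\omega$ and $\omega'$ and Theorem 2.4 (via van der Waerden) supplying $3 \Rightarrow 4$. No gaps; this matches the paper's argument.
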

  
  \section{A more general statement}
  
  One can cast the arguments above into a more general form, and prove (we leave the details to the reader) the following statement.
  
  
  \begin{thm}
  Let $S$ be a finite set, and let $S^+$ denote the free semigroup on $S$.  For $t \in \mathbb{N}$, let $$\mu:S^+ \rightarrow \mathbb{Z}^t$$ be a morphism, that is, for all $B_1, B_2 \in S^+,$  $$\mu(B_1B_2) = \mu(B_1) + \mu(B_2).$$  Let $\omega$ be an infinite word on $S$.  Assume further that there exists $M \in \mathbb{N}$ such that $$|B_1| = |B_2| \Rightarrow ||\mu(B_1) - \mu(B_2)\\\\|| \le M,$$ where $||\cdot ||$ denotes Euclidean distance in $\mathbb{Z}^t.$  Then for all $k \ge 1,$   $\omega$ contains a $k$-power modulo $\mu,$ that is, $\omega$ has a factor $B_1B_2 \cdots B_k$ with $$|B_1| = |B_2| = \cdots = |B_k|,\ \  \mu(B_1) = \mu(B_2) = \cdots = \mu(B_k).$$
  
  \end{thm}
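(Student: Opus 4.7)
The plan is to imitate the proof of Theorem 2.2, with the sum functional $\sum$ replaced by the morphism $\mu$ and with reduction modulo $M_2+1$ replaced by coordinate-wise reduction modulo a suitably chosen integer $N$. The one genuinely new ingredient is the observation that the hypothesis $\|\mu(B_1)-\mu(B_2)\|\le M$ (Euclidean norm) forces each coordinate of the difference to lie in $[-M,M]$, so the choice $N=M+1$ will convert a coordinate-wise congruence mod $N$ into an honest equality.

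Writing $\omega=x_1x_2x_3\cdots$ and $P_n=x_1x_2\cdots x_n$ for the length-$n$ prefix of $\omega$, I would introduce the coloring
$$f:\mathbb{N}\to(\mathbb{Z}/N\mathbb{Z})^t,\qquad f(n)=\mu(P_n)\bmod N,$$
with $N=M+1$ and the reduction performed coordinate-wise. This uses only $N^t$ colors, so van der Waerden's theorem applies: for any $k\ge 1$ there exist positive integers $t,s$ with $f(t)=f(t+s)=\cdots=f(t+ks)$. Setting $B_i=\omega[t+(i-1)s+1,\,t+is]$ for $1\le i\le k$ and using the morphism property in the form $\mu(P_{t+is})=\mu(P_{t+(i-1)s})+\mu(B_i)$, one obtains
$$\mu(B_1)\equiv\mu(B_2)\equiv\cdots\equiv\mu(B_k)\pmod{N}$$
coordinate-wise.

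To conclude, note that $|B_1|=|B_2|=\cdots=|B_k|=s$, so the hypothesis of the theorem yields $\|\mu(B_i)-\mu(B_j)\|\le M$ for all $i,j$. Each coordinate of $\mu(B_i)-\mu(B_j)$ is then bounded in absolute value by $M<N$ and simultaneously divisible by $N$, and must therefore vanish; hence $\mu(B_1)=\cdots=\mu(B_k)$ and $B_1B_2\cdots B_k$ is the required $k$-power modulo $\mu$. No real obstacle arises: the entire argument is a direct vectorial analogue of Theorem 2.2, and the only point deserving care is the choice $N>M$, which is where the Euclidean bound on $\mu(B_i)-\mu(B_j)$ is used (any norm equivalent to $\ell^{\infty}$ on $\mathbb{Z}^t$ would serve equally well).
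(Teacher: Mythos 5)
Your proof is correct and is precisely the argument the paper intends: the paper leaves the details of this theorem to the reader, stating only that one casts the proof of Theorem 2.2 (the van der Waerden coloring of prefix sums) into this more general form, which is exactly what you have done by coloring $n \mapsto \mu(P_n) \bmod (M+1)$ coordinate-wise and using that the Euclidean bound dominates the coordinate-wise bound. The only cosmetic blemish is that you reuse the letter $t$ both for the dimension of $\mathbb{Z}^t$ and for the starting term of the arithmetic progression; renaming one of them would avoid confusion.
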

  
  Thus taking $S$ to be a finite subset of $\mathbb{Z}^m,$ and $\mu(B) = \sum B \in \mathbb{Z}^m,$ we obtain Theorem 2.4.\
  
  Taking $S$ to be a finite set and $\mu(B) = \psi(B) \in \mathbb{Z}^{|S|},$ we obtain the Corollary to Theorem 3.2

\bigskip

\noindent\textit{Department of Mathematics,
Simon Fraser University, Burnaby, BC, Canada, V5A 1S6\\
hardal@sfu.ca\\
tbrown@sfu.ca\\
vjungic@sfu.ca\\
jds16@sfu.ca\\}

\end{document}